\newtheorem{theorem}{Theorem}[section]
\newtheorem{lemma}[theorem]{Lemma}
\newtheorem{claim}[theorem]{Claim}
\newcommand{\rank}{\mbox{rank }}
\newcommand{\R}{{\mathbb R}}
\newcommand{\real}{{\mathbb R}}
\newcommand{\rat}{{\mathbb Q}}
\newcommand{\sm}{{\setminus}}
\newcommand{\cT}{{\mathcal T}}
\title{Vertex Splitting, Coincident Realisations and Global Rigidity of Braced Triangulations}
\date{\today}
\author{James Cruickshank\thanks{School of Mathematics, Statistics and Applied Mathematics, National
University of Ireland, Galway, Ireland. E-mail: james.cruickshank@nuigalway.ie}, Bill Jackson\thanks{School of Mathematical Sciences, Queen Mary
University of London, Mile End Road, London E1 4NS, United Kingdom.
E-mail: b.jackson@qmul.ac.uk} and Shin-ichi Tanigawa\thanks{Department of Mathematical Informatics, Graduate School of Information Science and Technology, University of Tokyo, 7-3-1 Hongo, Bunkyo-ku, 113-8656,  Tokyo Japan. Email: {tanigawa@mist.i.u-tokyo.ac.jp}}}
\begin{document} \maketitle

\begin{abstract}
We give a relatively short graph theoretic proof of a result of Jord\'an and Tanigawa that a 4-connected graph which has a spanning plane triangulation as a proper subgraph is generically globally rigid in $\real^3$.  Our proof is based on a new  sufficient condition for the so called vertex splitting operation to preserve generic global rigidity in $\real^d$. 

\end{abstract}

\noindent {\bf Keywords} Bar-joint framework, global rigidity, vertex splitting, plane triangulation.\\

\noindent {\bf Mathematics Subject Classification} 52C25, 05C10, 05C75

\section{Introduction}
We consider the problem of determining when a configuration consisting of a finite set of points in $d$-dimensional Euclidean space $\real^d$ is uniquely defined up to congruence by a given set of constraints which fix the Euclidean distance between certain pairs of points. 

More formally, we define a {\em $d$-dimensional framework} to be a pair $(G,p)$ where $G=(V,E)$ is a graph and $p:V\to \R^d$ 
is a point configuration. 
We will also refer to the framework $(G,p)$ and the configuration $p$ as  {\em realisations} of $G$ in $\R^d$. The length of an edge of $G$ in $(G,p)$ is given by the Euclidean distance between its endvertices. Two realisations $(G,p)$ and $(G,q)$ of $G$ in $\R^d$ are {\em congruent} if $(G,p)$ can be obtained from $(G,q)$ by an isometry of $\R^d$ i.e. a combination of translations, rotations and reflections. 
The framework $(G,p)$ is {\em globally rigid} if every framework which has the same edge lengths as $(G,p)$ is congruent to $(G,p)$. It is {\em rigid} if every continuous motion of the vertices of $(G,p)$ in $\R^d$ which preserves the edge lengths results in a framework which is congruent to $(G,p)$.  It is {\em infinitesimally rigid} if it satisfies the stronger property that every infinitesimal motion of the points which preserves the edge lengths is induced by an infinitesimal isometry of $\R^d$ i.e. a combination of infinitesimal translations and rotations. (A formal definition of infinitesimal rigidity will be given in Section \ref{sec:inf}.)

Saxe \cite{S} showed that it is NP-hard to determine when a realisation of $G$ in $\R^d$ is globally rigid for all $d\geq 1$. Abbot \cite{A} showed that the same holds for rigidity for all $d\geq 2$. (It is straightforward to show that a 1-dimensional framework is rigid if and only if its underlying graph is connected.) These decision problems become more tractable, however,  if we restrict our attention to `generic realisations'. A configuration $p$ or framework $(G,p)$ is said to be {\em generic} if 
the set of coordinates of the points $p(v)$, $v\in V(G)$, is algebraically independent over $\rat$. Gluck \cite{G} showed that rigidity and infinitesimal rigidity are equivalent properties of  generic frameworks and depend only on the underlying graph. This enables us to define a graph $G$ as being {\em rigid in $\R^d$} if some (or equivalently every) generic realisation of $G$ in $\R^d$ is rigid (or equivalently infinitesimally rigid). Analogous, but much deeper, results of Connelly \cite{Cglob}
and Gortler, Healy and Thurston \cite{GHT} imply that the global rigidity of a generic framework depends only on its underlying graph and allows us to 
define a graph $G$ as being {\em globally rigid in $\R^d$} if some (or equivalently every) generic realisation of $G$ in $\R^d$ is globally rigid.
It is 
known that a graph is globally rigid in $\R$ if and only if it is 2-connected. Characterisations of graphs which are rigid or globally rigid in $\R^2$ are given in \cite{L,PG} and \cite{JJ}, respectively.
It is a major open problem in distance geometry to characterise rigid or globally rigid graphs when $d\geq 3$, although many partial results exist for particular families of graphs. We refer the reader to \cite{SW, JW} for  recent survey articles on framework rigidity.

%

Gluck~\cite{G} showed that every plane triangulation, i.e. maximal planar graph on at least three vertices, is rigid in $\R^3$. Plane triangulations other than $K_3$ and $K_4$ do not have enough edges to be globally rigid but   
a recent result of Jord\'an and Tanigawa \cite{JT} characterises when {\em braced plane triangulations}, i.e. graphs constructed from plane triangulations by adding additional edges called {\em braces}, are globally rigid in $\real^3$.

\begin{theorem}\label{JKT} Suppose that $G$ is a braced plane triangulation with at least five vertices.  Then $G$ is globally rigid in $\R^3$ if and only if $G$ is $4$-connected and has at least one brace.
\end{theorem}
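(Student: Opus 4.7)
The plan is to handle the two directions separately. The necessity direction follows from standard necessary conditions for generic global rigidity: by Hendrickson's theorem, a graph generically globally rigid in $\R^3$ on more than $4$ vertices must be $4$-connected and redundantly rigid (meaning rigidity is preserved upon deletion of any edge). A plane triangulation on $n$ vertices has exactly $3n-6$ edges, which is the minimum size for rigidity in $\R^3$, so $T$ itself cannot be redundantly rigid; hence $G=T$ is excluded and $4$-connectivity is required.

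For sufficiency, I would argue by induction on $n=|V(G)|$. In the inductive step the strategy is to find an edge $e=uv$ of $T$ such that contracting $e$ produces a smaller graph $G/e$ which still satisfies the hypotheses of the theorem. By the inductive hypothesis $G/e$ is globally rigid in $\R^3$, and then a new vertex-splitting lemma, the main technical result of the paper, will be used to transfer global rigidity from $G/e$ back to $G$ via the vertex split at the contracted vertex. To apply this reduction the edge $e$ must be chosen so that $u$ and $v$ share exactly two common neighbours in $T$ (ensuring that $T/e$ is again a plane triangulation), so that $G/e$ remains $4$-connected, and so that at least one edge of $E(G)\sm E(T)$ survives in $G/e$ (so that $T/e \neq G/e$). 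Existence of such an edge should follow from known results on contractible edges in $4$-connected plane triangulations, combined with a case analysis around configurations where the extra edges of $G$ obstruct the contraction.

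The heart of the argument will be the vertex-splitting lemma, and the strategy, indicated by the paper's title, proceeds through \emph{coincident realisations}. Starting from a generic globally rigid realisation $(G/e,p)$ in $\R^3$, one constructs a (non-generic) realisation of $G$ by placing the two split vertices $u,v$ at the same point $p(u)=p(v)=p(v^*)$, yielding a framework in which the new edge $uv$ has length zero. Under suitable local hypotheses, for instance that $u$ and $v$ share two common neighbours in $G$ whose positions together with the coincident point span an affine plane, I would show that there is an equilibrium stress on $G$ at this coincident configuration whose stress matrix attains the maximum possible rank. Connelly's sufficient condition for generic global rigidity, combined with a perturbation argument showing that maximum-rank stress matrices persist under generic perturbations off the coincident locus, will then yield global rigidity of a generic realisation of $G$.

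The principal obstacle is the coincident realisation step. While equilibrium stresses on $G/e$ extend naturally to equilibrium stresses on $G$ at the coincident configuration, controlling the rank of the associated stress matrix, and in particular ensuring that the new edge $uv$ is assigned a nonzero stress, requires extra geometric structure at the coincident point, typically a pair of common neighbours providing independent linear constraints. A secondary obstacle is the combinatorial reduction to a contractible edge: ensuring simultaneously that $4$-connectivity of $G$ and strict containment $T\subsetneq G$ are preserved after contraction may require delicate case analysis based on the distribution of the edges in $E(G)\sm E(T)$.
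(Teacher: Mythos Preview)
Your overall inductive plan (contract an edge, apply induction, undo via vertex splitting) matches the paper, and your necessity argument is exactly theirs. Two points deserve comment.

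\medskip
\textbf{The vertex-splitting step.} Your plan attacks the split via equilibrium stresses and Connelly's sufficient condition, and you correctly flag rank control of the stress matrix as the principal obstacle. The paper sidesteps this entirely. Its Theorem~\ref{vsplitc} assumes as a separate hypothesis that $G$ has an \emph{infinitesimally rigid} realisation with $u,v$ coincident; given that, the coincident framework is trivially globally rigid (any equivalent framework of $G$ yields one of $G/e$, which is globally rigid), and the Connelly--Whiteley stability lemma (global rigidity of an infinitesimally rigid framework is an open condition) transfers global rigidity to a nearby generic realisation. No stress-matrix analysis is needed. The existence of an infinitesimally rigid coincident realisation is then established separately by an inductive argument using Whiteley's vertex-splitting lemma for infinitesimal rigidity. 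This decomposition --- global rigidity via stability, infinitesimal rigidity via a parallel induction --- is considerably simpler than lifting stresses and controlling their rank.

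\medskip
\textbf{A genuine gap.} Your plan appeals to ``known results on contractible edges in $4$-connected plane triangulations'', but nothing in the hypotheses forces $T$ itself to be $4$-connected: $G$ is $4$-connected, yet $T$ may have separating $3$-cycles, and then the contractible-edge results you invoke do not apply. The paper handles this case with a separate argument: choose an innermost separating $3$-cycle $C_1$ of $T$, let $T_1$ be the triangulation inside $C_1$, observe that $4$-connectivity of $G$ forces a brace from the interior of $C_1$ to the outside, and then split into subcases according to whether $T_1\cong K_4$, $T_1$ is the octahedron, or $|V(T_1)|\geq 7$. A gluing lemma is used to combine an infinitesimally rigid coincident realisation of $T_1$ plus a brace with a generic realisation of the rest of $G$ across $C_1$. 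Your outline does not anticipate this bifurcation, and without it the induction cannot proceed when $T$ fails to be $4$-connected.
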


 We will give a relatively short proof of this result.

\subsection{Vertex splitting} 
Our proof of Theorem~\ref{JKT} is inductive and is based on  (the 3-dimensional version of) Theorem \ref{vsplitc} below, which verifies a special case of the so-called vertex splitting conjecture. Before stating this theorem we need to introduce some new terminology.

%
%
%
%
%
%
%

Given a graph $G=(V,E)$ and $v\in V$ with neighbour set  $N_G(v)$, the {\em $d$-dimensional vertex splitting operation}  constructs a new graph $G'$ by choosing  pairwise disjoint sets  $U_1,U_2,U_3$ with $U_1\cup U_2\cup U_3=N_G(v)$ and $|U_2|=d-1$, deleting all edges from $v$ to $U_3$, and then adding a new vertex $v'$ and $|U_3|+d$ new edges from $v'$ to each vertex in $U_2\cup U_3\cup \{v\}$.
Whiteley \cite{Wsplit} showed that the vertex splitting operation preserves rigidity in $\R^d$ and  
conjectured that it also preserves global rigidity in $\R^d$ whenever $v$ and $v'$ both have degree at least $d+1$ in $G'$, see \cite{CW,JW}.
We will verify a special case of this conjecture.

\begin{theorem}\label{vsplitc} Let $G=(V,E)$ be a graph which is 
globally rigid in $\R^d$ and $v\in V$. Suppose that $G'$ is obtained
from $G$ by a $d$-dimensional vertex splitting operation which splits $v$ into two vertices $v$ and $v'$,  and that $G'$ can be realised as an
infinitesimally rigid framework in $\R^d$ in which $v$ and $v'$ are coincident. Then
$G'$ is generically globally rigid in $\R^d$.
\end{theorem}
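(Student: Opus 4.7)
The plan is to apply the Gortler-Healy-Thurston (GHT) characterization of generic global rigidity: a graph $H$ with $|V(H)|\ge d+2$ is generically globally rigid in $\R^d$ if and only if some realisation of $H$ admits an equilibrium stress whose stress matrix has rank $|V(H)|-d-1$. By lower semi-continuity of rank over the variety of (framework, stress) pairs, it suffices to exhibit one realisation of $G'$ with a maximum-rank stress matrix of rank $|V(G')|-d-1$.

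I would start from the given coincident infinitesimally rigid realisation $q^*$ of $G'$. Identifying $v'$ with $v''$ produces a realisation $p^*$ of $G$ which is also infinitesimally rigid: any infinitesimal motion of $p^*$ lifts to an infinitesimal motion of $q^*$ by setting $\dot q^*(v')=\dot q^*(v'')=\dot p^*(v)$, and must therefore be trivial. By openness of infinitesimal rigidity together with GHT applied to $G$, I can select a generic realisation $p$ of $G$ close to $p^*$ carrying an equilibrium stress $\omega$ with stress matrix $\Omega$ of maximum rank $|V(G)|-d-1$. The coincident realisation $q$ of $G'$ built from $p$ (with $q(v')=q(v'')=p(v)$) is close to $q^*$ and hence also infinitesimally rigid.

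The key linear-algebraic step is the folding identity. Let $S:\R^{V(G')}\to\R^{V(G)}$ denote the map that sends $e_{v'}$ and $e_{v''}$ both to $e_v$ and fixes the other basis vectors. For any equilibrium stress $\omega'$ on $(G',q)$ that projects to $\omega$ (summing the stresses on $v'u$ and $v''u$ to give the stress on $vu$), one checks that $\Omega=S\Omega' S^T$. Because $\ker S=\mathrm{span}(e_{v'}-e_{v''})$ and $S^T$ is an isomorphism onto $W:=\{x\in\R^{V(G')}:x_{v'}=x_{v''}\}$, this identity gives $\ker\Omega'\cap W\cong\ker\Omega$ of dimension $d+1$, so $\rank\Omega'$ is forced into the set $\{|V(G')|-d-2,\;|V(G')|-d-1\}$.

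The main obstacle is to rule out the deficient value $|V(G')|-d-2$. At the coincident realisation itself $\rank\Omega'$ can be too small; in degenerate examples the only equilibrium stress on $(G',q)$ is supported on the single edge $v'v''$, yielding rank $1$. To circumvent this I would perturb $q$ to a non-coincident realisation $\tilde q_t$ with $\tilde q_t(v')=p(v)+tw$ and $\tilde q_t(v'')=p(v)-tw$ for a generic direction $w\in\R^d$ and small $t>0$; by openness $\tilde q_t$ remains infinitesimally rigid. A first-order Taylor expansion of a continuously chosen equilibrium stress $\tilde\omega(t)$ and the associated stress matrix $\tilde\Omega(t)$ shows that, once $v'v''$ has positive length, its contribution to $\tilde\Omega(t)$ along the direction $e_{v'}-e_{v''}$ is of order $t$ and, for generic $w$, lies outside the image of $\tilde\Omega(t)$ restricted to $W$; this destroys any extra kernel vector. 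Consequently $\rank\tilde\Omega(t)=|V(G')|-d-1$ for small $t\ne 0$, and GHT yields the generic global rigidity of $G'$.
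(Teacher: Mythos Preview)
The paper's proof is much shorter and proceeds differently. It observes that the coincident realisation $(G',p')$ built from a generic $(G,p)$ is not only infinitesimally rigid but also \emph{globally rigid}: any $(G',q')$ equivalent to $(G',p')$ has $\|q'(v')-q'(v'')\|=0$, so it descends to a realisation of $G$ equivalent, hence congruent, to $(G,p)$. The Connelly--Whiteley stability result (Lemma~\ref{lem:stable}) then immediately gives global rigidity of all nearby realisations of $G'$, in particular of a generic one. You never exploit the global rigidity of the coincident framework, and instead try to manufacture a maximum-rank stress matrix directly.

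There are two genuine gaps in your argument. First, the lifted equilibrium stress $\omega'$ need not exist. The projection from equilibrium stresses on $(G',q)$ to those on $(G,p)$ has one-dimensional kernel (the stress supported on the zero-length edge $v'v''$), and since the two stress spaces have equal dimension the image is only a hyperplane; your chosen max-rank $\omega$ may well lie outside it. (You implicitly concede this when you note that in degenerate cases the only equilibrium stress on $(G',q)$ is the edge stress.) Second, and more seriously, the perturbation paragraph is where all the content would have to be, and it is not a proof. You assert that a continuously varying equilibrium stress $\tilde\omega(t)$ exists with the right limit and that for generic $w$ the extra kernel direction is destroyed at first order, but neither claim is justified: the baseline $\tilde\Omega(0)$ is undefined when the lift fails, and the assertion that the order-$t$ contribution ``lies outside the image of $\tilde\Omega(t)$ restricted to $W$'' is precisely the rank-jump you are trying to establish. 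In effect you are attempting to reprove the Connelly--Whiteley stability theorem in this special case without the one ingredient---global rigidity of $(G',p')$---that makes their argument work.
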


\noindent
The assertion that Theorem \ref{vsplitc} is a special case of Whiteley's conjecture follows from Lemma \ref{lem:coin_nec} below (which implies that, if $G'$ can be realised as a $vv'$-coincident
infinitesimally rigid framework in $\R^d$, then $G'-vv'$ is rigid so $v$ and $v'$ must both have degree at least $d+1$ in $G'$).

Theorem \ref{vsplitc}  can be deduced from  the results of Connelly\cite{C} using the stress matrix characterisation of global rigidity in \cite{GHT}. We give a  shorter direct proof in Section \ref{sec:vsplit}.
Theorem \ref{vsplitc}
has already been used by Jord\'an, Kiraly and Tanigawa in \cite{JKT1} to repair a gap in the proof of their characterisation of generic global rigidity 
for body-hinge frameworks given in \cite{JKT}.\footnote{They mistakenly state   in \cite{JKT} that a stronger form of  Theorem \ref{vsplitc} (in which the hypothesis  that $G'$ can be realised as a $vv'$-coincident
infinitesimally rigid framework in $\R^d$ is replaced by the hypothesis  that $G'-vv'$ is rigid) is implied by  \cite{C}.} An analogous result to Theorem \ref{vsplitc} was used in \cite{JN1,JN2} to obtain a characterisation of generic global rigidity for cylindrical frameworks.

\subsection{Coincident rigidity}
In order to use Theorem~\ref{vsplitc}, we need to have sufficient conditions for a graph to have an  infinitesimally rigid realisation  with two coincident points.
The analysis of such graphs was initiated by Fekete, Jord\'an and
Kaszanitzky \cite{FJK}, who gave a complete combinatorial characterization in the two dimensional case.

Given two vertices $u,v$ in a graph $G=(V,E)$ we use $G-uv$ to denote the graph obtained from $G$ by deleting the edge $uv$ if it exists in $G$ (and putting $G-uv=G$ if it does not exist). We also use $G/uv$ to denote the graph obtained from $G$ by replacing $u$ and $v$ by a single vertex $w$ which is adjacent to every neighbour of $u$ and $v$ in $V\sm \{u,v\}$. We say that $G$ is {\em $uv$-coincident rigid} in $\R^d$ if $G$ can be realised  as an infinitesimally rigid framework $(G,p)$ in $\R^d$ with $p(u)=p(v)$. 
Fekete, Jord\'an and
Kaszanitzky \cite{FJK} showed that $G$ is $uv$-coincident rigid in $\R^2$ if and only if $G-uv$ and $G/uv$ are both 
rigid in $\R^2$. Their proof of necessity extends immediately to $\R^d$.

\begin{lemma}\label{lem:coin_nec}
Suppose $u,v$ are two vertices of a graph $G$. If $G$ is $uv$-coincident rigid  in $\R^d$  then $G-uv$ and $G/uv$ are both 
rigid in $\R^d$.
\end{lemma}

The converse direction does not hold in $\mathbb{R}^3$ (see \cite[Section 5.2]{GJ}), and it is an open problem to characterize  $uv$-coincident rigidity in terms of rigidity in $\mathbb{R}^d$.

In order to link Theorems \ref{JKT} and \ref{vsplitc}, 
we will obtain sufficient conditions for  the $uv$-coincident rigidity of braced plane triangulations in $\mathbb{R}^3$.
Lemma \ref{lem:coin_nec} implies that no plane triangulation $T$ can be $uv$-coincident rigid in $\mathbb{R}^3$ for two adjacent vertices $u,v$ since $T-uv$ has too few edges to be rigid. 
Our third main result   gives a sufficient condition  for $T$  to become  $uv$-coincident rigid after the addition of at least one brace. 


\begin{theorem}\label{thm:coincident1}
Let $G$ be a $4$-connected braced plane triangulation which is obtained from a plane triangulation $T$ by adding at least one brace. Suppose that  $e=uv$ is an edge of $T$ which does not belong to any separating $3$-cycle of $T$. 
Then $G$ is $uv$-coincident rigid  in $\mathbb{R}^3$
\end{theorem}

In the forthcoming paper~\cite{CJT}, we shall verify a conjecture of Connelly concerning  the global rigidity of triangulated surfaces in $\mathbb{R}^3$. 
Theorems~\ref{vsplitc} and~\ref{thm:coincident1} will be key ingredients in our proof.

\section{Infinitesimally rigid realisations}\label{sec:inf}

We can determine whether a given $d$-dimensional framework $(G,p)$ is infinitesimally rigid by calculating the rank of its {\em rigidity matrix}. This is the matrix of size $|E|\times d|V|$ in which each row is indexed by an edge, sets of $d$ consecutive columns are indexed by the vertices, and the row indexed by the  edge $e=uv$ has the form:
\[
\kbordermatrix{
  & &  u & & v & \\
 e=uv & 0 \dots 0 & p(u)-p(v) & 0\dots 0 & p(v)-p(u) & 0\dots 0
}.
\]
The space of {\em infinitesimal motions}  of $(G,p)$ is given by the right kernel of $R(G,p)$.   The framework  $(G,p)$ is  {\em infinitesimally rigid} if $\rank R(G,p)=d|V|-{d+1\choose 2}$ when $|V|\geq d$ and $\rank R(G,p)={|V|\choose 2}$ when $|V|<d$, since this will imply that every vector in $\ker R(G,p)$ is an infinitesimal isometry of $\R^d$.
Since the rank of $R(G,p)$ will be maximised whenever $(G,p)$ is generic, 
the infinitesimal  rigidity of a generic framework $(G,p)$ in $\R^d$ depends only on the underlying graph $G$.
We say that  $(G,p)$ is {\em independent} if $R(G,p)$ is row independent, and is {\em minimally rigid} if it is both infinitesimally rigid and independent. 

The fact that the entries in the rigidity matrix of $(G,p)$ are linear in the coordinates of $p$ implies that, if $G$ is $uv$-coincident rigid and  $(G,p)$ is a $uv$-coincident  realisation of $G$ in which  $p|_{V-v}$ is generic, then $(G,p)$ will be  infinitesimally rigid. We will refer to such a realisation as a {\em generic $uv$-coincident realisation} of $G$.

In order to apply Theorem \ref{vsplitc}, we need to construct an infinitesimally rigid realisation of a graph in which two given vertices are coincident.  We will use  the following results on infinitesimal rigidity to do this.  

We first give a precise statement of Whiteley's above mentioned  vertex splitting theorem \cite[Corollary 11]{Wsplit}.
%
%
%
%
%
%

\begin{lemma}\label{lem:split} 
Suppose that $(G,p)$ is an infinitesimally rigid framework  in $\R^d$ and that $G'$ is obtained from $G$ by a vertex splitting operation which splits a vertex $v$ of $G$ into two vertices $v',v''$. Suppose further that, for some $X\subseteq  N_{G'}(v')\cap N_{G'}(v'')$ with $|X|=d-1$, the points in $\{p(w)\,:\,w\in X+v\}$ are affinely independent in $\R^d$. 
Then $(G',p')$ is infinitesimally rigid for some $p'$ with $p'(v')=p(v)$ and $p'(w)=p(w)$ for all $w\in V(G)\setminus \{v\}$.
\end{lemma}



Our next two lemmas concern the so called 1-extension and gluing operations. 
The {\em $d$-dimensional $1$-extension operation} on a graph $G$ constructs  a new graph $G'$ by deleting an edge $uv$ and then adding a new vertex $w$ with $u,v\in N_{G'}(w)$ and $|N_{G'}(w)|=d+1$. The {\em $d$-dimensional gluing operation} constructs a new graph by taking the union of two graphs with at least $d$ vertices in common.
These lemmas can be proved using standard techniques: for example, Lemma \ref{lem:1ext} follows from the first part of  the proof of \cite[Theorem 11.1.7]{Wlong}, and the proof of  \cite[Lemma 3.1.4]{Wlong} easily extends to give Lemma \ref{lem:glue}.

\begin{lemma}\label{lem:1ext} 
Suppose that $(G,p)$ is an infinitesimally rigid framework  in $\R^d$ and that $G'$ is obtained from $G$ by a $d$-dimensional $1$-extension operation which  adds a new vertex $w$. Suppose further that the points $\{p(x)\,:\,x\in N_{G'}(w)\}$ are in general position in $\R^d$.
Then there is an extension $p':V(G')\rightarrow \mathbb{R}^d$ of $p$ such that 
$(G',p')$ is infinitesimally rigid.
\end{lemma}


\begin{lemma}\label{lem:glue} 
Let $(G,p)$ be a framework  in $\R^d$, $G_1$ and $G_2$ be subgraphs of $G$ with $G=G_1\cup G_2$,  and $U\subseteq V(G_1)\cap V(G_2)$ with $|U|=d$. Suppose that  $(G_1,p|_{G_1})$ and  $(G_2,p|_{G_2})$ are infinitesimally rigid and $p(U)$ is in general position in $\R^d$. Then  
$(G,p)$ is infinitesimally rigid.
\end{lemma}

To prove Theorem \ref{JKT}, we also need the following combination of Lemmas \ref{lem:1ext} and \ref{lem:glue}.

\begin{lemma}\label{lem:elem} Let $G_1,G_2$ be graphs which are rigid in $\R^d$ and satisfy $|(V(G_1)\cap V(G_2))|\geq d$,
$x\in V(G_1)\sm V(G_2)$, $y\in V(G_2)\sm V(G_1)$, $z\in V(G_1)\cap V(G_2)$ and $xz\in E(G_1)$.  Put $G=(G_1\cup G_2)-xz+xy$. Suppose that $(G_1,p_1)$ is an infinitesimally rigid realisation of $G_1$ and that $p_1$ is generic on $(V(G_1)\cap V(G_2))\cup\{x\}$. Then $(G,p)$  is infinitesimally rigid for some $p$ with $p(v)=p_1(v)$ for all $v\in V(G_1)$.
\end{lemma}
\begin{proof} Let $(G'_1,p_1')$ be obtained from $(G_1-xz,p_1)$ by adding the vertex $y$ at a point $p_1'(y)$ whose coordinates are algebraically independent over 
the field obtained by extending  $\rat$ by the coordintes of $p_1$, and then adding an edge from $y$ to $x$ and all vertices in $V(G_1)\cap V(G_2)$. Then $(G_1',p_1')$ is infinitesimally rigid by Lemma \ref{lem:1ext} (since it can be obtained from $(G_1,p_1)$ by a 1-extension
and a possibly empty sequence of edge additions). 
 Lemma \ref{lem:glue} now implies that $(G_1'\cup G_2,p)$ is infinitesimally rigid for any generic extension $p$ of $p_1'$. We can now deduce that that $(G,p)$ is infinitesimally rigid (since $(G_2,p|_{G_2})$ is infinitesimally rigid,  we can delete any edges of $G_1'$ from $y$ to  $V(G_1)\cap V(G_2)$ which do not belong to $G_2$ without destroying the infinitesimal rigidity of $(G_1'\cup G_2,p)$).  
\end{proof}

\section{Global rigidity and vertex splitting}\label{sec:vsplit}
In this section we  prove Theorem \ref{vsplitc}.
We need the following result of Connelly and Whiteley \cite[Theorem 13]{CW}, which shows that global rigidity is a stable property for infinitesimally rigid frameworks, to prove Theorem \ref{vsplitc}.

\begin{lemma}\label{lem:stable} 
Suppose that $(G,p)$ is an infinitesimally rigid, globally rigid framework on $n$ vertices in $\real^d$. Then there exists an open neighbourhood $N_p$ of $p$ in $\real^{dn}$ such that $(G,q)$ is infinitesimally rigid and globally rigid for all $q\in N_p$.
\end{lemma}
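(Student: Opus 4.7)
My plan is to treat the two conclusions of the lemma separately, since infinitesimal rigidity is an open condition on $p$ while global rigidity requires a compactness argument. For infinitesimal rigidity, the Jacobian of the rigidity map $f_G$ has entries depending linearly on the coordinates of the configuration, and its rank is a lower semi-continuous function of $p$. Since this rank attains the maximal value specified in the definition of infinitesimal rigidity at $p$, it does so on some open neighbourhood $N_1$ of $p$, so every framework $(G,q)$ with $q \in N_1$ is infinitesimally rigid.

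For global rigidity I would argue by contradiction: suppose no neighbourhood works, so there is a sequence $p_n \to p$ together with configurations $q_n \in \R^{dn}$ satisfying $f_G(q_n) = f_G(p_n)$ but with $q_n$ not congruent to $p_n$. The edge lengths $f_G(q_n)$ converge to $f_G(p)$ and are therefore uniformly bounded; infinitesimal rigidity at $p$ forces $G$ to be connected (otherwise the space of trivial infinitesimal motions would have dimension exceeding $\binom{d+1}{2}$), so each $(G,q_n)$ has uniformly bounded diameter. Applying a rigid motion to each $q_n$ we may assume that the sequence lies in a compact set, pass to a convergent subsequence $q_n \to q^\ast$, and invoke continuity of $f_G$ together with the global rigidity of $(G,p)$ to conclude that $q^\ast$ is congruent to $p$. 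A single further rigid motion lets us assume $q_n \to p$.

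At this stage $p_n, q_n \to p$ with $f_G(p_n) = f_G(q_n)$. The finishing step uses infinitesimal rigidity at $p$ to show that $f_G$ is locally injective modulo congruence in a neighbourhood of $p$: the kernel of the rigidity matrix at $p$ coincides with the tangent space to the orbit of the Euclidean group, so restricting $f_G$ to a slice transverse to this orbit gives a map whose differential at $p$ is injective, and the inverse function theorem then makes it a local diffeomorphism onto its image. For large $n$ this forces $p_n$ and $q_n$ to be congruent, contradicting the choice of $q_n$. The main obstacle will be bookkeeping with the successive rigid motions used in the normalisation step, and proving local injectivity cleanly; I expect the latter is best handled by explicitly parameterising a slice to the Euclidean orbit through $p$ rather than working directly on the quotient $\R^{dn}/\mathrm{SE}(d)$.
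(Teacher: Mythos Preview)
The paper does not give its own proof of this lemma; it is quoted as a result of Connelly and Whiteley~\cite{CW}. Your outline is correct and is essentially the argument used in~\cite{CW}: openness of infinitesimal rigidity via lower semicontinuity of the rank, followed by a compactness argument reducing global rigidity to local injectivity of $f_G$ modulo congruences, the latter coming from restricting $f_G$ to a slice transverse to the Euclidean orbit through $p$.

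Two small technical remarks, neither a genuine gap. First, since $|E|$ may exceed $dn-\binom{d+1}{2}$, the restriction of $f_G$ to the slice is an immersion rather than a local diffeomorphism between spaces of equal dimension, so you should appeal to local injectivity of immersions (equivalently, the constant rank theorem) rather than the inverse function theorem as literally stated. Second, your use of a slice transverse to the orbit implicitly assumes that the isometry group acts freely near $p$; this holds because an infinitesimally rigid configuration on $n\geq d+1$ vertices must affinely span $\mathbb{R}^d$, a fact worth recording explicitly.
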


Note that, although the definition of a framework $(G,p)$ in \cite{CW} requires $p(u)\neq p(v)$ for all $uv\in E(G)$, this simplifying assumption is not needed in the proof of \cite[Theorem 13]{CW}.

\paragraph{Proof of Theorem \ref{vsplitc}.}
Let $(G,p)$ be a generic realisation of $G$ in $\real^d$ and let $(G',p')$ be the $vv'$-coincident realisation of $G'$ obtained by putting $p'(u)=p(u)$ for all $u\in V$ and $p'(v')=p(v)$. The genericity of $p$ implies that the rank of the rigidity matrix of any $vv'$-coincident realisation of $G'$ will be maximised at $(G',p')$ and hence $(G',p')$ is infinitesimally rigid. The genericity of $p$ also implies that $(G,p)$ is globally rigid, and this in turn implies that $(G',p')$ is globally rigid. We can now use Lemma \ref{lem:stable} to deduce that $(G',q)$ is globally rigid for any generic $q$ sufficiently close to $p'$. Hence $G'$ is globally rigid.
\qed

\vspace{1em}

We will be exclusively concerned with 3-dimensional frameworks in the remainder of the paper so will henceforth suppress reference to the ambient space $\R^3$  and say, for example, that a graph is rigid to mean it is rigid in $\R^3$.

\section{Coincident rigidity of plane triangulations}\label{sec:contract}
A graph $T$ is a {\em plane triangulation} if it has a 2-cell embedding in the plane in which every face has three edges on its boundary. 
The infinitesimal rigidity of realisations of plane triangulations in $\R^3$ is one of the fundamental topics in graph rigidity, see, e.g.,~\cite{W}. 
 As a warm up to the analysis of $uv$-coincident rigidity of braced plane triangulations, we shall analyze the $uv$-coincident rigidity of plane triangulations.
As remarked in the introduction, no plane triangulation $T$ can be $uv$-coincident rigid in $\mathbb{R}^3$ for two adjacent vertices $u, v$ in $T$. 
We will give a sufficient condition for the $uv$-coincident rigidity of $T$ when $u,v$ are not adjacent. This result will be  used to obtain our characterisation of globally rigid triangulated surfaces in \cite{CJT}.
 
 We will need the following notation and elementary results for (a particular embedding of) a plane triangulation $T$. Every cycle $C$ of $T$ divides the plane into two open regions exactly one of which is bounded. We refer to the bounded region as the {\em inside of $C$} and the unbounded region as the {\em outside of $C$}.  We say that $C$ is a {\em separating cycle} of $T$ if both regions contain vertices of $T$. If $S$ is a minimal vertex cut-set of $T$ then $S$ induces a separating cycle $C$.
It follows that every plane triangulation with at least four vertices is $3$-connected and that a plane triangulation with at least five vertices is 4-connected if and only if it contains no separating 3-cycles. Given an edge $e$ of $T$ which belongs to no separating 3-cycle of $T$, we can obtain a new plane triangulation $T/e$ by contracting the edge $e$ and its end-vertices to a single vertex (which is located at the same point as one of the two end-vertices of $e$), and replacing the multiple edges created by this contraction by single edges. 
For $X\subseteq V(T)$, let $T[X]$ be the subgraph of $T$ induced by $X$.
Given two vertices $u,v$ of $T$, we say that an edge $f\in E(T)$ is {\em $uv$-admissible} if $\{u,v\}$ is not contained in the unique face of $T-f$ of size four. Our motivation for considering such edges is that, if $f$ is $uv$-admissible and $T/f$ is $uv$-coincident rigid, then we can apply Lemma \ref{lem:split} to deduce that $T$ is $uv$-coincident rigid.

We can now give our sufficient condition for $uv$-coincident rigidity of plane triangulations in $\mathbb{R}^3$.
Its proof also  illustrates our strategy for proving Theorem~\ref{thm:coincident1}. 


\begin{theorem}\label{thm:coinplanar}
Let $T = (V, E)$ be a plane triangulation and $u, v \in V$. 
Suppose that $uv\notin E$ and that no separating $4$-cycle of $T$ contains both $u$ and $v$. 
Then $T$ is $uv$-coincident rigid.
\end{theorem}
\begin{proof}
Suppose the statement does not hold, and let $T=(V,E)$ be a counterexample such that $|V|$ is minimal. Then 
$T$ is not $uv$-coincident rigid for two non-adjacent vertices 
$u$ and $v$ which are not contained in any  separating $4$-cycle of $T$.
%
We first consider the case when $T$ is not 4-connected. 
Then $T$ has a separating 3-cycle $C$ and we have $T=T_1\cup T_2$ for two subtriangulations $T_1,T_2$ of $T$ with $T_1\cap T_2=C$. Note that $\{u,v\}\not\subseteq V(C)$ since $uv\not\in E$. If $u\in V(T_1)\sm V(C)$ and 
$v\in V(T_2)\sm V(C)$ then we can choose generic realisations $p_i$ of $T_i$ for $i=1,2$ with $p_1(w)=p_2(w)$ for $w\in V(C)$ and $p_1(u)=p_2(v)$. Then each $(T_i,p_i)$ is infinitesimally rigid and we can now use Lemma \ref{lem:glue} to deduce that the realisation $p$ of $T$ with $p(x)=p_i(x)$ for all $x\in V(T_i)$ is a $uv$-coincident infinitesimally rigid  realisation of $T$. Hence we may assume that $u,v\in V(T_1)$. Then we may apply the minimality of $T$ to deduce that there exists 
a generic $uv$-coincident infinitesimally rigid  realisation $p_1$ of $T_1$. Since 
$\{u,v\}\not\subseteq V(C)$, we may 
now choose a generic realisation  $p_2$ of $T_2$  with $p_2(w)=p_1(w)$ for all $w\in V(C)$ and proceed as in the previous subcase. 

It remains to consider the case when $T$ is 4-connected. Then $T/f$ is a plane triangulation for all $f\in E$.
Let $S$ be the (possibly empty) set of all edges of $T$ which lie on  a $uv$-path in $T$ of length two.
\begin{claim}\label{claim:coinplanar1}
$T$ has a $uv$-admissible edge $f$ with $f\notin S$.
\end{claim}
\begin{proof}
Recall that $u$ and $v$ are not adjacent in $T$.
If there is an edge $f$ incident to $u$ or $v$ and not contained in a $uv$-path of length two, then $f$ is $uv$-admissible with $f\notin S$.
Hence, we may assume that every edge  incident to $u$ or $v$ is contained in a $uv$-path of length two.
By 4-connectivity, there are at least four internally disjoint $uv$-paths of length two.
By planarity, at least one pair of these $uv$-paths forms a separating 4-cycle in $T$.
This contradicts the hypothesis that $\{u,v\}$ is not contained in a separating $4$-cycle.
\end{proof}

We also have the following.
\begin{claim}\label{claim:coinplanar2}
For every $uv$-admissible edge $f$ with $f\notin S$, $T$ has a separating 5-cycle  which contains $\{u,v\}$ and $f$.
\end{claim}
\begin{proof}
Let $f$ be a $uv$-admissible edge.
Let $w_1$ and $w_2$ be the vertices of $T$ which lie on the same face as $f$ and are not incident to $f$.
 Let $z$ be the new vertex we obtain when we contract $f$ to form $T/f$. We modify this notation when $f$ is incident to $u$ or $v$ by putting $z=u$ or $z=v$, respectively, to ensure that $\{u,v\}\subseteq V(T/f)$. Note that $uv\not\in E(T/f)$ since $f$ does not lie on a $uv$-path of length two.
%
If  $\{u,v,f\}$ is not contained in a separating 5-cycle of $T$,
then no separating $4$-cycle of $T/f$ contains both $u$ and $v$.  
So by  the minimality of $T$, there is a generic $uv$-coincident infinitesimally rigid framework $(T/f,p)$. 
Since $f$ is $uv$-admissible, $\{u,v\}\not\subseteq \{w_1,w_2,z\}$.  So the points in $p(w_1), p(w_2), p(z)$ are in general position and we may apply Lemma \ref{lem:split} to deduce that $T$ is $uv$-coincident rigid,
which is a contradiction.
\end{proof} 

By Claim~\ref{claim:coinplanar1} and Claim~\ref{claim:coinplanar2}, $T$ has a separating 5-cycle containing $\{u,v\}$.
Choose a separating 5-cycle $C$ of $T$ which contains $\{u,v\}$ and is such that some component $H$ of $T-V(C)$ has the minimum number of vertices. Let $C=uwvxyu$. By symmetry we may assume that $H$ is contained inside $C$. 
By the 4-connectivity of $T$ and the hypothesis that $T$ has  no separating 4-cycle containing $u$ and $v$, 
\begin{equation}\label{eq:coinplanar}
\text{$C$ has no chord incident to $u$ or $v$.}
\end{equation} 

Since $T$ is 4-connected, at least one of $x$ or $y$ is incident to a vertex inside 
$C$.
Without loss of generality, we assume $T$ has an edge $f=xz$ for some $z$ inside 
$C$.
Then $f$ is $uv$-admissible by (\ref{eq:coinplanar}) and clearly $f\notin S$.
Hence, by Claim~\ref{claim:coinplanar2}, $T$ has a separating 5-cycle $C'$ that contains $\{u,v\}$ and $f$.
By (\ref{eq:coinplanar}), $C'=uzxvz'u$ holds for some $z'\in V(T)$.
The minimum choice of $C$ and $H$ implies that $z'$ is not inside 
$C$.

\begin{claim}\label{claim:coinplanar3}
$N_T(w)=\{u,z,x,v,z'\}$.
\end{claim}
\begin{proof}
Since $T$ has the 5-cycle  $C'=uzxvz'u$, it suffices to show that $wz', wx, wz$ exist in $T$.
Observe first that $uwvz'$ forms a 4-cycle. Since there is no separating 4-cycle containing $u$ and $v$, 
we have $wz'\in E(T)$.

To see $wx, wz\in E(T)$, consider  the 5-cycle $C''=uzxvwu$.
The minimum choice of $C$ and $H$ implies that there are no vertices inside $C''$.
Moreover, by (\ref{eq:coinplanar}),  $wx$ and $wz$ are the only possible chords of $C''$.
This gives $wx, wz\in E(T)$.
\end{proof}

By Claim~\ref{claim:coinplanar3}, 
$C'$ and the set of edges incident to $w$ forms a wheel on six vertices, and the 4-connectivity of $T$ implies that $C'$ has no chord in $T$. 
Then $wx$ is $uv$-admissible with $wx\notin S$, and hence $T$ has a separating 5-cycle $C''$ which contains $\{u,v\}$ and $wx$ by Claim~\ref{claim:coinplanar2}. By (\ref{eq:coinplanar}), we have $C''=uwxvz''u$ for some vertex $z''\in V(T)\sm N_T(w)$. This  implies that $uwvz''u$ is a separating 4-cycle in $T$ which contains $\{u,v\}$, a contradiction.
\end{proof}

The plane triangulation $T$ obtained by joining two nonadjacent vertices $u,v$ to all vertices of a cycle $C$ of length at least four shows that the conclusion  of Theorem \ref{thm:coinplanar} may not hold if we remove the hypothesis that  no separating 4-cycle contains both $u$ and $v$: Lemma \ref{lem:coin_nec} implies that $T$ is not $uv$-coincident rigid since $T/uv$ has too few edges to be rigid.

\section{Contractible edges in plane triangulations}\label{subsec:contract}

Hama and Nakamoto \cite{HM}, see also Brinkmann et al \cite{BLSC}, showed that every 4-connected plane triangulation $T$ other than the octahedron has an edge $e$ such that $T/e$ is a 4-connected plane triangulation. 
For the analysis of braced triangulations, we will need  more detailed information on the distribution of such contractible edges. We will frequently use the following well known properties of a 4-connected plane triangulation $T$.
\begin{itemize}
\item $T/e$ is 4-connected if and only if $e$ belongs to no separating 4-cycle of $T$.
\item No separating 4-cycle in $T$ can have a chord.
\item No proper subgraph of $T$ can be a plane triangulation.
\item The octahedron is the unique 4-connected plane triangulation with at most six vertices.
\end{itemize}

Our first lemma is statement (b) in the proof of \cite[Theorem 0.1]{BLSC}. We include a proof for the sake of completeness.

\begin{lemma} \label{lem:contract}
    Let $T$ be a $4$-connected plane triangulation with at least seven vertices, $u$ be a vertex of $T$ of degree four and $e_1=uv_1,e_2=uv_2$ be two cofacial edges  of $T$.
    Then $T/e_i$ is $4$-connected for some $i=1, 2$.
\end{lemma}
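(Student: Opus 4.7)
I would argue by contradiction. Suppose that neither $T/e_1$ nor $T/e_2$ is $4$-connected, and label the cyclic neighbours of $u$ as $v_1,v_2,v_3,v_4$ so that $e_1=uv_1$ and $e_2=uv_2$ lie on the common face $uv_1v_2$. The starting point is the standard fact that if $T/e$ fails to be $4$-connected then $e$ lies in a separating $4$-cycle of $T$, and in a $4$-connected plane triangulation every separating $4$-cycle is chordless. So I may pick chordless separating $4$-cycles $C_1\ni e_1$ and $C_2\ni e_2$.

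Next I would pin down the structure of $C_1$ and $C_2$. Writing $C_1 = u v_1 x b$ in cyclic order, chordlessness forces $b = v_3$ (the only neighbour of $u$ that is not adjacent to $v_1$, using that $v_1 v_3 \notin E$ --- otherwise $u v_1 v_3$ would be a separating $3$-cycle since $u v_2$ and $u v_4$ lie between $u v_1$ and $u v_3$ in the cyclic order at $u$) and $x \in (N(v_1)\cap N(v_3))\setminus N(u)$. By the same analysis $C_2 = uv_2 x' v_4$ with $x' \in (N(v_2)\cap N(v_4))\setminus N(u)$. The crux is to show $x = x'$. Here I would use planarity: $C_1$ is a Jordan curve, and at $u$ the two edges $uv_1, uv_3$ of $C_1$ separate the angular sector containing $v_2$ from the one containing $v_4$, so $v_2$ and $v_4$ lie on opposite sides of $C_1$. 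The sub-path $v_2 \to x' \to v_4$ of $C_2$ must therefore meet $C_1$, and since edges of a planar embedding do not cross, the intermediate vertex $x'$ must itself lie on $C_1$; this forces $x' = x$. Set $y := x = x'$.

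To finish, I would extract a contradiction from the existence of $y$. Since $y$ is adjacent to all of $v_1, v_2, v_3, v_4$, each triangle $y v_i v_{i+1}$ (indices mod $4$) is a $3$-cycle of $T$, and $4$-connectivity (no separating $3$-cycles) forces each to bound a face. These four faces already account for every angular sector at $y$, which forces the link of $y$ to be exactly the $4$-cycle $v_1 v_2 v_3 v_4$ and hence $\deg y = 4$. A quick tally ($V = 6$, $E = 12$, $F = 8$) identifies $T$ with the octahedron, contradicting $|V(T)| \ge 7$. The main obstacle I expect is the planarity argument identifying $x$ with $x'$; once this is made the remaining steps are short and essentially mechanical.
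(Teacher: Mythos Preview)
Your argument is correct and follows the same overall strategy as the paper: assume both contractions fail, pin down the two separating $4$-cycles as $uv_1xv_3$ and $uv_2x'v_4$, show $x=x'$, and derive a contradiction from the resulting octahedral configuration on $\{u,v_1,v_2,v_3,v_4,x\}$.

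The only differences are in the execution of the last two steps. To force $x=x'$, the paper argues the contrapositive: if $x\neq x'$ then $\{u,v_1,v_2,v_3,v_4\}$ together with the paths $v_1xv_3$ and $v_2x'v_4$ give a subdivision of $K_5$, contradicting planarity. Your Jordan-curve argument (that $v_2,v_4$ lie on opposite sides of $uv_1xv_3$, so the path $v_2x'v_4$ must hit that cycle at $x'$) reaches the same conclusion more directly and is perfectly fine. For the final contradiction, the paper simply observes that the induced subgraph on $\{u,v_1,v_2,v_3,v_4,x\}$ is a plane triangulation and invokes the fact (stated just before the lemma) that a $4$-connected triangulation has no proper spanning subtriangulation. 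Your route---showing each $xv_iv_{i+1}$ is a face, hence $\deg x=4$, and then tallying---also works, though the ``quick tally'' hides one small step: you should note (e.g.\ via the same consecutive-neighbour argument applied at each $v_i$) that every vertex in $\{u,x,v_1,\dots,v_4\}$ has all its neighbours inside this set, so connectivity of $T$ forces $|V(T)|=6$. Once that is said, your finish is complete.
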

\begin{proof}
Suppose, for a contradiction, that  $T/e_i$ is not 4-connected for both $i=1,2$.
 Let $C_1=v_1v_2v_3v_4v_1$ be the separating 4-cycle of $T$ which contains  the neighbours of $u$. 
Since  $T/e_1$ is not 4-connected, $T$ has a separating 4-cycle $C_2$ containing $e_1$. 
    Since no separating 4-cycle of $T$ can have a chord, 
    $C_2=wv_1uv_3w$ for some vertex $w \in V(T)\sm (V(C_1)\cup \{u\})$.
    Similarly, since  $T/e_2$ is not 4-connected, $T$ has a separating 4-cycle $ C_3=w'v_2uv_4w'$ for some $w' \in V(T)\sm (V(C_1)\cup \{u\})$. If $w' \neq w$
    then $T[V(C_1) \cup \{u,w,w'\}]$ contains a subgraph homeomorphic to $K_5$ contradicting 
    the planarity of $T$. On the other hand, if $w = w'$, then $T[V(C_1) \cup \{u,w\}]$ is a proper
    subtriangulation of $T$ and this contradicts the hypothesis that $T$ is a 4-connected triangulation.
\end{proof}

\begin{lemma} \label{lem:contract2}
    Suppose that $T$ is a $4$-connected plane triangulation with at least seven vertices and $F$ is 
    a  face of $T$. Then $T/e$ is $4$-connected for some edge $e$ of $T - V(F)$.
   \end{lemma}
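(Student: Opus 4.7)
The plan is to argue by cases on the location of degree-$4$ vertices of $T$, with an induction on $n = |V(T)|$ in the harder case.

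The main case is when $T$ has a degree-$4$ vertex $u \notin V(F)$. Let $v_1, v_2, v_3, v_4$ be the cyclic neighbours of $u$. Any chord $v_i v_{i+2}$ of this cycle would create a separating $3$-cycle $u v_i v_{i+2}$, contradicting $4$-connectedness of $T$, so the $4$-cycle is chordless. Applying Lemma \ref{lem:contract} to each of the four cofacial pairs $(uv_j, uv_{j+1})$ at $u$, the non-contractible edges at $u$ form an independent set in the $4$-cycle of cofacial adjacencies on $\{uv_1, uv_2, uv_3, uv_4\}$, so at least one opposite pair $\{uv_i, uv_{i+2}\}$ consists of two contractible edges. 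Since $V(F)$ spans a triangle in $T$ but $v_i v_{i+2} \notin E(T)$, at most one of $v_i, v_{i+2}$ lies in $V(F)$; the opposite-pair edge whose $v$-endpoint lies outside $V(F)$ is the required contractible edge in $T - V(F)$.

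Suppose now that every degree-$4$ vertex of $T$ lies in $V(F)$, and argue by induction on $n$. For the base case $n = 7$, a degree-sum argument shows that $T$ must be the pentagonal bipyramid: the only other candidate degree sequence $(6,4,4,4,4,4,4)$ forces chords of the hexagonal link of the degree-$6$ vertex and hence a separating $3$-cycle. One verifies directly that the contractible edges of the pentagonal bipyramid are exactly the five equatorial edges, and each triangular face contains only two equatorial endpoints, leaving two contractible edges in $T - V(F)$. For the inductive step, let $x \in V(F)$ be a degree-$4$ vertex and write its cyclic neighbourhood as $y, z, a, b$ with $F = xyz$, so that $a, b \notin V(F)$. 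Lemma \ref{lem:contract} applied to the cofacial pair $(xa, xb)$ at the face $xab$ produces a contractible edge, say $xa$. Setting $T' = T/xa$ and $F' = \{x^*, y, z\}$ (where $x^*$ is the new vertex arising from identifying $x$ with $a$), the inductive hypothesis applied to the $4$-connected triangulation $T'$ on $n - 1 \geq 7$ vertices with distinguished face $F'$ yields a contractible edge $e'$ of $T'$ with both endpoints outside $V(F')$. These endpoints correspond to vertices of $T$ outside $\{x, a, y, z\} \supseteq V(F)$; and $e'$ is contractible in $T$ by a standard contraction argument (any vertex cut of $T/e'$ of size at most $3$ would survive contracting $xa$, contradicting the $4$-connectedness of $(T/e')/xa = T'/e'$).

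The main obstacle is the remaining sub-case in which $T$ has no degree-$4$ vertex at all, so the minimum degree of $T$ is at least $5$; by Euler's formula this forces $n \geq 12$. If $T$ has no separating $4$-cycle then $T$ is $5$-connected, every edge of $T$ is contractible, and any edge of the non-empty connected induced subgraph $T - V(F)$ serves. The more delicate situation is when $T$ has a separating $4$-cycle; here I would work with a degree-$5$ vertex $u \notin V(F)$ (at least nine of which exist by Euler's formula), refining the cofacial-pair analysis of Lemma \ref{lem:contract} to the degree-$5$ setting: one enumerates the possible separating $4$-cycles through pairs of consecutive edges at $u$ (each such cycle must reach a non-adjacent neighbour of $u$) to show that enough of the five edges at $u$ are contractible, and then uses that at most three of $u$'s neighbours lie in $V(F)$ to select one whose other endpoint is also outside $V(F)$.
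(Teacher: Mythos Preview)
Your route is quite different from the paper's. The paper argues by contradiction in a few lines: fixing an embedding with $F$ as the outer face, it takes a separating $4$-cycle $C=v_1v_2v_3v_4$ with inclusion-minimal interior (one exists since some edge of $T-V(F)$ is assumed non-contractible), relabels so that $v_1,v_2\notin V(F)$ (possible since $C$ is chordless and $F$ is a triangle), and picks an interior vertex $u$ adjacent to $v_1$. The assumed non-contractibility of $uv_1$, together with the minimality of $C$, forces $u$ to be the \emph{unique} interior vertex of $C$, hence $\deg(u)=4$; Lemma~\ref{lem:contract} applied to the cofacial pair $uv_1,uv_2$ then yields a contractible edge in $T-V(F)$, a contradiction. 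No case split or induction is needed.

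Your Cases 1 and 2 are essentially sound, though the ``standard contraction argument'' at the end of Case~2 needs a bit more: when a separating $4$-cycle of $T$ through $e'$ contains exactly one of $x,a$, you must check that the other is not the sole vertex on one side of that cycle, and this uses your Case~2 hypothesis (so that $a$ has degree $\geq 5$) together with the fact that the endpoints of $e'$ avoid $\{x,a,y,z\}$. The real gap is Case~3: when $\delta(T)\geq 5$ and $T$ has a separating $4$-cycle, you only \emph{sketch} an approach via a degree-$5$ vertex and an unspecified refinement of Lemma~\ref{lem:contract}. That lemma genuinely depends on the vertex having degree~$4$, and the sketch does not make clear how to extend it. In fact the paper's minimality argument closes this case immediately: for a minimal separating $4$-cycle $C$ and an interior vertex $u$ adjacent to some $v_1\in V(C)\setminus V(F)$, either $uv_1$ is contractible and lies in $T-V(F)$, or (if not) the minimality argument forces $\deg(u)=4$, contradicting the Case~3 hypothesis. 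So your Case~3 can be completed, but once you import that idea the three-case induction is considerably longer than the paper's single contradiction.
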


\begin{proof} 
    Fix a plane embedding of $T$ with $F$ as the unbounded face. Since $T$ is 4-connected, some edge $e$ of $T$ is not incident with $V(F)$. If $T$ has no separating 4-cycle then $T/e$ would be 4-connected so we may assume that $T$ has a separating 4-cycle.
    Let $C = v_1v_2v_3v_4v_1$ be a separating 4-cycle such that the set of vertices inside $C$ is  minimal with respect to inclusion. Since $T$ is 4-connected, $C$ has no chords and hence, relabelling $V(C)$ if necessary,  we may assume that $v_1,v_2\not\in V(F)$.  Let $uv_1$ be an edge from a vertex $u$ inside $C$ to $v_1$. If $T/uv_1$  is 4-connected then we are done, so we may also assume that $v_1u$ belongs to a separating 4-cycle $C_1$ of $T$. The 
    minimality of $C$ implies that   $C_1=wv_1uv_3w$ for some vertex $w$ outside $C$, and hence that $u$ is the only vertex inside $C$ (otherwise $C_2=v_1uv_3v_2v_1$ or $C_3=v_1uv_3v_4v_1$ would contradict the minimality of $C$). This in turn implies that $u$ has degree 4 in $T$.  We can now use Lemma \ref{lem:contract} and the fact that $T/uv_1$ is not 4-connected to deduce that $T/uv_2$ is 4-connected.
\end{proof}

Let $C$ be a separating cycle in a plane triangulation $T=(V,E)$, $H_1$ and $H_2$ be the components of $G-C$ such that $H_1$ is inside $C$, and  $R\subset V$. We say that {\em $R$ crosses  $C$} if both  $H_1$ and $H_2$ contain a vertex in $R$. 
For $i=1,2$, let $H_i^+$ be the subgraph of $T$ consisting of $C\cup H_i$ and all edges of $T$ joining $C$ to $H_i$.
A separating 4-cycle $C$ is said to be  {\em inner minimal} if no other separating 4-cycle of $T$ is contained in  $H_1^+$.
 We say that $C$ is {\em outer minimal} if it satisfies the same condition with respect to $H_2^+$, and that $C$ is  {\em minimal} if it is either inner minimal or outer minimal.



\begin{lemma}\label{lem:contractedge2}
Let $T=(V,E)$ be a  4-connected plane triangulation, and $R\subset V$ with $4\leq |R|< |V|$.
Then  there exists an edge $f\in E$ such that $f$ is not induced by $R$  and  $R$ crosses every separating $3$-cycle of $T/f$.
\end{lemma}
\begin{proof} 
Let $f=xy$ be an edge of $G$ incident to a vertex $x\in V\sm R$. Then $T/f$ is a plane triangulation by the 4-connectivity of $T$. Furthermore, $R$ crosses every separating $3$-cycle of $T/f$ unless
\begin{enumerate}
\item[($\dagger$)]  $f$ belongs to a separating 4-cycle $C$ of $T$ and some component $H$ of $T-C$ contains no vertex of $R$.
\end{enumerate}

Suppose, for a contradiction, that the lemma is false
and that  $(T,R)$ is a counterexample.
Then, for every $x\in V\sm R$, each edge $f$ which is incident to $x$ belongs to a separating 4-cycle $C=x_1x_2x_3x_4x_1$ which satisfies condition ($\dagger$). 
We may assume that $x$, $f$ and $C$ have been chosen such that the specified component $H$ of $T-C$  has a minimal number of vertices. 
By symmetry, we  may also  assume that $H$ is contained inside $C$.  
Then $H^+$ is a plane graph and all interior faces of $H^+$ are triangles. 

Supoose $E(H)\neq\emptyset$, and let $f'=x'x''\in E(H)$.
Since $T$ is a counterexample, $f'$ belongs to a separating 4-cycle $C'$ of $T$ which satisfies ($\dagger$). The choice of $(x,f,C)$ now implies that $C'$ is not contained in $H^+$. This in turn implies that $C'\cap H^+$ is a path of length three joining two non-adjacent vertices, say $x_1,x_3$, of $C$, and $x_1x_3$ is a chord of $C$. This contradicts the hypothesis that  $T$ is a 4-connected plane triangulation.


It remains to consider the case when $E(H)=\emptyset$. Then $|V(H)|= 1$ and $H^+$ is a wheel on five vertices. Let $x_5$ be the unique vertex of $H$. Then $x_5\not\in R$, and each of the four triangular faces of $H^+$ incident to $x_5$ is a face of $T$.
If $T$ has more than six vertices, ($\dagger$) would be violated by Lemma~\ref{lem:contract}, which is a contradiction.
Hence $T$ has at most six vertices, and $T$ is the octahedron.
%
%
%
%
Let $x_6$ be the unique vertex of $T-H^+$. 
Then   the unique separating 4-cycle of $T$ which contains  $x_3x_5$, respectively $x_4x_5$, is $C_1=x_3x_5x_1x_6x_3$,  respectively $C_2=x_4x_5x_2x_6x_4$.
Since the edges $x_3x_5$ and $x_4x_5$ must both satisfy ($\dagger$),
each of $T-C$, $T-C_1$ and $T-C_2$ has a component with no vertices in $R$.
This gives $|V\sm R|\geq 3$ and contradicts the hypothesis that $|R|\geq 4$.
%
%
%
\end{proof}

\section{Braced triangulations}

%
Recall that a {\em braced plane triangulation}  is a graph $G=(V,E\cup B)$ which is the union of a plane triangulation $T=(V,E)$ and  a (possibly empty) set $B$ of additional edges which we refer to as {\em braces}. 
Given a braced plane triangulation $G=(T,B)$ and an edge $f$ of $T$ which belongs to no separating 3-cycle of $T$, we denote the braced plane triangulation obtained by contracting the edge $f$ by
$G/f=(T/f,B_f)$ where the set of bracing edges $B_f$ is obtained from $B$ by   replacing any multiple edges in $G/f$ by single edges (in particular any edge of $B$ which becomes parallel to an edge of $T/f$ is deleted). 

Our general strategy to prove Theorem~\ref{thm:coincident1} is to find a $uv$-admissible edge $f\in E(T)$ such that $G/f$ satisfies the hypotheses  of the theorem, apply induction to $G/f$, and then apply 
the vertex splitting lemma (Lemma~\ref{lem:split}) 
to return to $G$. 

\subsection{Admissible edges}

Let $T$ be a plane triangulation.
Observe that, when $e=uv\in E(T)$,  $f \in E(T)$ is $uv$-admissible if and only if $e,f$ do not belong to a common face of $T$.
Our first result shows how Lemma~\ref{lem:split} can be used to complete the inductive step in our proof of Theorem~\ref{thm:coincident1}.

.
\begin{lemma}\label{lem:split_uv}
Let $G=(T, B)$ be a braced triangulation, and $e=uv\in E(T)$.
Suppose $f\in E(T)$ is $uv$-admissible and $G/f$ is $uv$-coincident rigid in $\mathbb{R}^3$.
Then $G$ is $uv$-coincident rigid.
\end{lemma}
\begin{proof}
Let $(G/f,p)$ be a generic $uv$-coincident rigid realization of $G/f$.
Let $f=ab$.
Let $w$ be the new vertex of $G/f$ created by contracting $f$, and $B_f$ be the set of bracing edges in $G/f$.
We may assume $B_f$ is a subset of $B$. (If an edge $e'$ in $B_f$ corresponds to more than one edge in $B$, then we choose one of these edges arbitrarily and identify it with $e'$.)
We may obtain a spanning subgraph $H$ of $G$  
from $G/f$ such that $N_H(a)\cap N_H(b)=N_T(a)\cap N_T(b)$ by applying the vertex splitting operation at $w$.
Since $p$ is a generic $uv$-coincident realization and $f$ is $uv$-admissible, 
 $\{p(x): x\in \{a\}\cup ( N_H(a)\cap N_H(b))\}$ is in general position.
Hence, the $uv$-coincident rigidity of $G$ follows from  Lemma~\ref{lem:split}. 
\end{proof}


 
We next give a lemma which will be used to solve the most difficult case in our proof of Theorem~\ref{thm:coincident1}: when $G$ is obtained from a 4-connected plane triangulation $T$ by adding exactly one brace $xy$.
Specifically, for 
$e=uv\in E(T)$, and $xy\notin E(T)$,  
we need to find a $uv$-admissible edge $f\in E(T)$ such that
\begin{itemize}
\item[(i)] $(T+xy)/f$ is a braced planar triangulation (with at least one brace),
\item[(ii)] $(T+xy)/f$ is 4-connected, and 
\item[(iii)] $e$ does not belong to any separating 3-cycle of $T/f$. 
\end{itemize} 
This can be guaranteed if $f$ satisfies the following conditions in $T$: 
\begin{itemize}
\item[(i')] $f$ does not belong to an $xy$-path of length two in $T$,
\item[(ii')] for every separating 4-cycle $C$ of $T$ which contains $f$, $\{x,y\}$ crosses $C$, and 
\item[(iii')] for every separating 4-cycle $C$ of $T$ which contains $f$, $e\notin E(C)$.
\end{itemize}
The following lemma shows that we can always find a $uv$-admissible edge $f\in E(T)$ satisfying (i'), (ii'), and (iii').

\begin{lemma}\label{lem:contractedge1}
Let $T=(V,E)$ be a  4-connected plane triangulation on at least seven vertices, $e=uv\in E$  and $x,y$ be two non-adjacent vertices of $T$. 
Then  there exists a $uv$-admissible edge $f\in E$ such that  
$f$ does not belong to an $xy$-path of length two and, for every separating $4$-cycle $C$ of $T$ which contains $f$,  we have $e\not\in E(C)$ and $\{x,y\}$ crosses  $C$.
\end{lemma}

%
\begin{proof}  Suppose, for a contradiction,  that the lemma is false and $T$ is a counterexample. Let $F,F'$ be the  faces of $T$ which contain $e$ and let $w$, respectively $w'$, be the vertex of $F$, respectively $F'$, that is different from $u,v$. 
Let
$S$ be the (possibly empty) set of all edges of $T$ which lie on an $xy$-path in $T$ of length two. The assumption that $T$ is a counterexample implies that
\begin{enumerate}
\item[($*$)] every edge of $E(T)\setminus (E(F)\cup E(F')\cup S)$ belongs to a separating 4-cycle $C$ of $T$ such that either $e\in E(C)$ or $C$ is not crossed by $\{x,y\}$.
\end{enumerate}
Since $T$ is a triangulation with more than five vertices, $T-(E(F)\cup E(F'))$ contains 
        a triangle. Since $S$ induces a bipartite subgraph, this implies that $E(T) \sm (E(F) \cup E(F') \cup S)$ is nonempty, so $T$ has a separating 4-cycle by ($*$).

\begin{claim}
        \label{clm:minsepcycle}
        Let $C$ be a minimal separating 4-cycle of $T$ and 
       $H$ be a component of $T-C$ such that no separating $4$-cycle of $T$ is contained in $H^+$. 
       Suppose that $e\not\in E(H^+)\sm E(C)$. Then $H^+$ is a  wheel on five vertices. Furthermore:\\[1mm]
        (a) if $e\in E(C)$,
        then    the unique vertex $z$ of $H$ satisfies $z\in \{w,w'\}\cap \{x,y\}$;\\[1mm]
        (b) if $e\not\in E(C)$ then  $\{x,y\}\subset V(C)$, exactly one of $\{u,v\}$, say $u$, belongs to $V(C)\sm\{x,y\}$ and $v$ is adjacent to the unique vertex of $V(C)\sm \{u,x,y\}$.
           \end{claim}
\begin{proof}[Proof of Claim]
        By symmetry, it will suffice to prove the claim when $C$ is inner minimal. The 4-connectivity of $T$ and minimality  of $C$ imply that $H^+$ is either a wheel on five vertices or is 4-connected. 
        
Suppose $H^+$ is 4-connected. 
We first show that we can choose  an edge $f\in E(H^+)\setminus E(C)$ such that $f\notin E(F)\cup E(F')\cup S$.
If there exists  a triangular face $F_1$ of $H^+$ such that $E(F_1)\subseteq E(H^+)\sm (E(C)\cup E(F)\cup E(F'))$,
then we can choose an edge    $f\in E(F_1)\sm S$ since $S$ induces a  bipartite subgraph of $T$.   
This edge $f$ satisfies $f\notin E(F)\cup E(F')\cup S$ as required.
On the other hand, if $H^+$ has no triangular face $F_1$ such that $E(F_1)\subseteq E(H^+)\sm (E(C)\cup E(F)\cup E(F'))$, then, since  $e\not\in E(H^+)\sm E(C)$,  $e\in E(C)$ holds and $H^+-e$ forms the wheel on six vertices.
In this case $H^+$ would have a vertex of degree three, contradicting the 4-connectivity of $H^+$.

Thus, we can choose an edge $f\in E(H^+)\setminus E(C)$ such that $f\notin E(F)\cup E(F')\cup S$,
and $f$  is contained in a separating 4-cycle $C'$ of $T$ by ($*$). 
The minimality of $C$  implies that $C'\not\subseteq H^+$ and the fact that $|V(H)|\geq 2$ 
now implies that either $C'$ or $C$ has a chord. This contradicts the 4-connectivity of $T$.
Hence $H^+$ is a wheel on five vertices. 

\medskip

(a) Suppose $e\in E(C)$. Then exactly one of  $w,w'$ is contained in $H^+$. Relabelling if necessary, we may assume that $V(H)=\{w'\}$. Let $C=uvstu$. By Lemma \ref{lem:contract}, at least one of the cofacial edges $w's,w't$, say $w's$, does not belong to a separating 4-cycle of $T$. We can now apply ($*$) to deduce that $w's\in S$ so $\{w',s\}\cap\{x,y\}\neq \emptyset$. 
If $s\in \{x,y\}$, say $s=x$, then the facts that $w's\in S$ and $xy\not\in E$ give $y=u$. 
Then $w't$ satisfies $w't\in E(T)\setminus (E(F)\cup E(F')\cup S)$.
Moreover, since every separating 4-cycle $C'$ which contains $w't$ is of the form $tw'vu't$ for some $u'\in V$, 
$C'$ does not contain $uv$ and is crossed by $\{x,y\}$.
This contradicts  ($*$).

\medskip

(b) Suppose $e\not\in E(C)$. Since $e\notin E(H^+)\setminus E(C)$, we have $e\notin E(H^+)$.
Let $C=v_1v_2v_3v_4v_1$. 
%
%
%
Since $H^+$ is a wheel on five vertices, the unique vertex $z$ of $H$ has degree four in $T$ and we can apply Lemma \ref{lem:contract} to $T$ to deduce that, after a possible relabelling of $V(C)$,  neither $zv_1$ nor $zv_3$ belong to a separating 4-cycle of $T$. 
By ($*$)  and $e\notin E(H^+)$, we have  $\{zv_1,zv_3\}\subseteq  S$ and $\{x,y\}=\{v_1,v_3\}$. Then $zv_2,zv_4\not\in S$ and ($*$) implies that there exists a separating 4-cycle $C'=v_2zv_4z'v_2$ of $T$ such that either $e\in E(C')$ or $\{x,y\}$ does not cross $C'$. Since  $\{x,y\}$ crosses $C'$, we have $e\in E(C')$. Hence $z'\in \{u,v\}$, $|\{u,v\}\cap \{v_2,v_4\}|=1$ and (b) holds. 
    \end{proof}

\begin{claim}
        \label{clm:sepcycle}
  Some separating 4-cycle of $T$ does not contain $e$.
           \end{claim}
\begin{proof}[Proof of Claim]
Suppose for a contradiction that every separating 4-cycle of $T$ contains $e$. 
Choose two separating 4-cycles $C_0 = uvu_1u_2$ and $C'_0 = uvu'_1u'_2$, such that  $C_0$ is outer minimal and $C_0'$ is inner minimal. Relabelling $w,w'$ and $x,y$ if necessary, Claim \ref{clm:minsepcycle} implies that $w=x$ is the unique vertex outside $C_0$ and $w'=y$ is the unique vertex inside $C_0'$. If $xu_1\not\in S$ then $(*)$ would give us a separating 4-cycle  $C'$ which contains $xu_1$. 
Then $C'$ cannot contain $e$ 
(otherwise, since $C'$ is separating, $C'=xu_1uvx$  holds and $C_0$ has the chord $uu_1$, contradicting the 4-connectivity of $T$)  so $C'$ would be  the required separating 4-cycle in $T$.  Hence  $xu_1\in S$. By symmetry, we also have $xu_2,yu_1'yu_2'\in S$. This implies that
$C_0 = C'_0$ and $\{x,y\} \cup V(C_0)$ induces a copy of the octahedron
in $T$. This contradicts the assumption that $T$ is 4-connected and has at least seven
 vertices.
 \end{proof}
  
We can now complete the proof of the lemma. By Claim \ref{clm:sepcycle}, $T$ has a separating 4-cycle $C$ which does not contain $e$. By symmetry we may assume that $e$ lies outside of $C$. Replacing $C$ by a minimal separating 4-cycle which is contained inside $C$ if necessary, we may assume that $C$ is inner minimal.
Relabelling $u,v$ if necessary, Claim \ref{clm:minsepcycle} now implies that $C=xuytx$ for some vertex $t\neq v$ which is adjacent to $v$ and there is exactly one vertex $z$ inside $C$.

Consider the separating 4-cycle $C'=uvtzu$ of $T$.  By symmetry we may assume that  $x,w$ lie outside $C'$ and $y,w'$ lie inside  $C'$. 
If $x=w$ and $y=w'$ then $T$ would contain a copy of the octahedron. This would contradict the hypothesis that $T$ is 4-connected plane triangulation with at least seven vertices so we may also assume by symmetry that $x\neq w$. Then $C''=uxtvu$ is another separating 4-cycle of $T$. We may now choose an outer minimal separating 4-cycle $C_1$ of $T$
which is contained in the closed region outside of $C''$. Let $H_1$ be the 
component of $T-C_1$ outside $C_1$. 

We will show that the pair $(C_1,H_1^+)$ contradicts Claim \ref{clm:minsepcycle}.  We have
$e\not\in E(H_1^+)\sm E(C_1)$ since $e\in E(C'')$ so 
$(C_1,H_1^+)$ satisfies the hypotheses of Claim \ref{clm:minsepcycle}. 
We also have  $y\not\in V(H_1^+)$ since $y$ is inside $C''$ and $x\not\in V(H_1^+)\sm V(C_1)$ since $x\in V(C'')$. Hence
$(V(H_1^+)\sm V(C_1))\cap \{x,y\}=\emptyset$ so part (a) of Claim \ref{clm:minsepcycle} does not hold, and 
$y\not\in V(C_1)$ so part (b) of Claim \ref{clm:minsepcycle} does not hold.
\end{proof}

\subsection{Coincident rigidity: proof of Theorem~\ref{thm:coincident1}}

We will  use the well known result that the maximal 4-connected subgraphs,   or {\em $4$-blocks}, of a plane triangulation $T$ form a tree like structure. More precisely we can define the {\em $4$-block tree}  of $T$ to be the 
tree $\cT$ whose vertex set  is the union of the set of 4-blocks and the set of separating 3-cycles of $T$,  in which a 4-block $D$ is adjacent  to  a separating 3-cycle $C$  if $C\subseteq D$.

\paragraph{Proof of Theorem \ref{thm:coincident1}.}
We proceed by contradiction. Suppose the theorem is false and let  $G=(T,B)$ be a counterexample such that $|E(G)|$ is as small as possible. 
%
If $|V|=5$ then $G\cong K_5$ and the theorem holds since $K_5$  is  $uv$-coincident rigid for every edge $uv\in E(K_5)$. Hence we may assume that $|V|\geq 6$.  
Consider the following two cases.

\paragraph{\boldmath Case 1: $T$ is 4-connected.} Choose $b=xy\in B$.
If $T+b\neq G$ then we can apply induction to $G-b$. Hence we have $G=T+b$. Let $F$ and $F'$ be the faces of $T$ which contain $e=uv$ and let $S$ be the set of edges of $T$ which lie on an $xy$-path of length two.
Since $T$ is 4-connected, we have $|V|\geq 6$ with equality only if $T$ is the octahedron.

Suppose $T$ is the octahedron. Then $T-V(F\cup F')\cong K_2$. Let $f$ be the unique edge in $T-V(F\cup F')$. 
Then $f$ is $uv$-admissible.
 If $f\in S$ then $b$ is incident with an end-vertex of both $e$ and $f$ and, up to symmetry, there is a unique choice for $e$ and $b$. Taking $e=uv$, $f=wz$ and $b=uz$ we have $G-w+g\cong K_5$ for a unique edge $g$ joining two neighbours of $w$ and we can now use Lemma \ref{lem:1ext} to obtain an infinitesimally rigid  
$uv$-coincident realisation of $G$ from an infinitesimally rigid  
$uv$-coincident realisation of $K_5$. 
Hence $f\not\in S$. Then $G/f\cong K_5$ and 
$G/f$ is $uv$-coincident rigid.
Since $f$ is $uv$-admissible, Lemma \ref{lem:split_uv} now implies that $G$ is $uv$-coincident rigid.

Hence  $|V(T)|\geq 7$. 
Lemma \ref{lem:contractedge1} now implies that there exists an edge $f\in E\sm (E(F)\cup E(F')\cup S)$ such that $\{x,y\}$ crosses every separating 3-cycle of $T/f$ that contains $e$. Then $G/f=T/f+b$ is a 4-connected braced triangulation and $b$ is a brace of $G/f$. Hence  $G/f$  is  $uv$-coincident rigid by the minimality of $G$.
Since $f$ is $uv$-admissible, $G$  is  $uv$-coincident rigid by Lemma~\ref{lem:split_uv}. 

\paragraph{\boldmath Case 2: $T$ is not 4-connected.}
Since $e$ does not belong to any separating $3$-cycle of $T$, $e$ belongs to a unique 4-block $D_e$ of $T$.
Since $T+B$ is 4-connected we can choose a path $P=D_0X_1D_1\ldots X_kD_k$ in the 4-block tree $\cT$ of $T$ such that $D_i=D_e$ for some $0\leq i\leq k$ and  some brace $b=xy\in B$ has $x\in V(D_0)\sm X_1$ and $y\in V(D_k)\sm X_k$. Let  $T'=\bigcup_{i=0}^k D_i$.  If $T'+b\neq G$ then the minimality of  $|E(G)|$ implies that $T'+b$ is $uv$-coincident rigid and we can now use 
Lemma \ref{lem:glue} to deduce that $G$ is $uv$-coincident rigid.
Hence $T'+b= G$. By symmetry, we may assume that $D_e\neq D_k$. 

We claim that $D_k$ is isomorphic to $K_4$.
To see this, suppose $D_k\not\cong K_4$. Then $D_k$ is 4-connected. 
Let $R=X_k+y$.
Then Lemma \ref{lem:contractedge2} gives an edge $f\in E(D_k)$ such that $f$ is 
not induced by $R$ and $R$ crosses every separating 3-cycle of $D_k/f$. Then $(T/f)+b$ is  4-connected so the minimality of $|E(G)|$ implies that $T/f+b$  is  $uv$-coincident rigid. Since $e\not\in E(D_k)$ , $f$ is $uv$-admissible and  Lemma~\ref{lem:split_uv}  implies that 
$G$  is  $uv$-coincident rigid, a contradiction. 

Hence $D_k$ is the complete graph on $X_k\cup \{y\}$. 
Denote $X_k=\{a_1, a_2, a_3\}$.
We next show that there is an edge $f'=xa_i$ such that $T-y+f'$ is 4-connected for some $i\in \{1,2,3\}$. 
If $a_i\in X_k\setminus X_{k-1}$, then $T-y+f'$ is 4-connected  for $f'=xa_i$.
Hence, assume $k=1$ and $X_1=\{a_1,a_2,a_3\}$. 
Then $T-y=D_0$, which is a 4-connected triangulation. 
Also, $xa_i\not\in E(D_0)$ for some $i$  since otherwise  $D_0\cong K_4$ and $|V|=5$, contradicting $|V|\geq 6$. 

Thus, without loss of generality, we have that $T-y+xa_1$ is a 4-connected braced plane triangulation.
Since $e$ does not belong to any separating 3-cycle in $T-y$, by induction $T-y+xa_1$ is $uv$-coincident rigid. 
Observe further that $G$ can be obtained from $T-y+xa_1$ by applying the vertex splitting operation to $a_1$.
Since $e$ is not induced by $\{a_1, a_2, a_3\}$ (as $e$ belongs to no separating 3-cycle in $T$), 
we can apply Lemma~\ref{lem:split} to conclude that $G$ is $uv$-coincident rigid.
\qed

\section{Global rigidity: proof of Theorem~\ref{JKT}}
In this section, we use  Theorems~\ref{vsplitc} and~\ref{thm:coincident1} to prove Theorem~\ref{JKT}.
 
\paragraph{Proof of Theorem \ref{JKT}.}
Let $G=(T, B)$ be a braced triangulation with at least five vertices, where $T=(V,E)$ is a plane triangulation and $B$ is a set of braces.
Necessity follows from (the 3-dimensional version of) a result of Hendrickson  \cite[Theorem 5.9]{H} which implies that every globally rigid graph on at least five vertices 
is 4-connected and 
remains rigid after the removal of any edge. Hence, if $G$ is globally rigid, then $G$ is 4-connected, and $B\neq\emptyset$ since otherwise  $G=T$ would have $3|V|-6$ edges so $G-e$ would have too few edges to be rigid for all $e\in E(G)$. 

To prove sufficiency we assume  $G$ is 4-connected and $B\neq \emptyset$. We prove that $G$ is globally rigid in $\R^3$ by induction on $|V|$. 
      If $|V|=5$ then $G\cong K_5$ and we are done since $K_5$ is globally rigid. Hence we may suppose that $|V|\geq 6$. 

We first consider the case when $T$ is a 4-connected plane triangulation. 
Choose a brace $xy\in B$ and let $S$ be the set of edges of $T$ which lie on an $xy$-path of length two. 
If  $|V(T)|=6$, then $T$ is the octahedron and  
 $G/f\cong K_5$ for all $f\in E(T)\sm S$, so $G/f$ is globally rigid. This would imply that  $G$ is globally rigid
by Theorems  \ref{thm:coincident1} and  \ref{vsplitc}.
 Hence we may assume that $|V(T)|\geq 7$. We can now apply Lemma \ref{lem:contractedge1} (taking $e$ to be an arbitrary edge of $T$) to deduce  that there exists an edge $f\in E(T)\setminus S$ such that $T/f$ is 4-connected.
Then $T/f+xy$ is a 4-connected braced triangulation so is globally rigid by induction, and we can again use Theorems  \ref{thm:coincident1} and  \ref{vsplitc} to deduce that $G$ is globally rigid.

Hence we may assume that $T$ is not 4-connected. Choose a fixed embedding of $T$ in the plane and let $C$ be a separating 3-cycle in $T$ such that the component $H$ of $G-C$ which lies inside $C$ is minimal with respect to inclusion. Let $H^+$ be the subtriangulation of $T$ obtained from $C\cup H$ by adding all edges of $T$ from $H$ to $C$.      Since $G$ is 4-connected there
    is a brace  $xy \in B$ with $x \in V(H)$ and $y \in V(T) \sm V(H^+)$. 
The minimality 
    of $H$ implies that $H^+$ is 4-connected or is isomorphic to $K_4$.

\paragraph{\boldmath Case 1: $H^+$ is isomorphic to $K_4$.} We first consider the subcase when there exists a vertex $z\in V(C)$ which is not adjacent to $y$ in $T$.
Then $G/xz$ is a 4-connected braced triangulation with at least one brace so is globally rigid by induction. In addition, $T-x$ is a plane triangulation so is rigid by Gluck's Theorem. This allows us to construct an $xz$-coincident infinitesimally rigid realisation $(G,p)$ from a generic infinitesimally rigid realisation $(G-x,p')$ by putting $p(x)=p'(z)$ and using the fact that $x$ has  at least three neighbours other than $z$ in $G$. Theorem \ref{vsplitc} now implies that $G$ is globally rigid. 

It remains to consider the subcase when, for every brace $xy$ incident to $x$ in $G$, $y$ is adjacent to every vertex of $C$ in $T$. Planarity now implies that $xy$ is the unique brace incident to $x$ and
$V(C)\cup \{y\}$ induces a copy of $K_4$ in $T$. The fact that $|V(T)|\geq 6$ now implies that $T-x$ is not 4-connected. In addition, 
$G-x=(T-x,B-xy)$ is a 4-connected braced plane triangulation, and has at least one brace since $T-x$ is not 4-connected. Then $G-x$ is globally rigid, by induction, and the fact that $x$ has degree four in $G$ now implies that $G$ is globally rigid.

\paragraph{\boldmath Case 2: $H^+$ is 4-connected.} Then $|V(H)|\geq 2$ and the minimality of $H$ now implies that some vertex $z\in V(C)$ is not adjacent to $x$ in $H^+$. Then $G'=H^++xz$ is a braced 4-connected  plane triangulation with exactly one brace. By Theorem \ref{thm:coincident1}, $G'$ has an infinitesimally rigid $uv$-coincident realisation for all edges $uv$ of $H^+$. We can now use Lemma \ref{lem:elem}, taking $G_1:=G'$ and $G_2:=G-V(H)$, to deduce:
\begin{enumerate}
\item[($\diamond$)] $G$ has an infinitesimally rigid $uv$-coincident realisation for all
edges $uv$ of $H^+$ with $\{u,v\}\not\subset V(C)\cup\{x\}$.
\end{enumerate}

Suppose $H^+$ is isomorphic to the octahedron. Let $uv$ be the unique edge of $H^+$ which is not incident to a vertex in $V(C)\cup \{x\}$. Then $G/uv=T/uv+xy$ is a 4-connected braced triangulation with at least one brace so is globally rigid by induction. We can now use Theorem \ref{vsplitc} and ($\diamond$)   to deduce that $G$ is globally rigid.

It remains to consider the subcase when  $|V(H^+)| \geq 7$. By Lemma \ref{lem:contract2}, there is an edge
    $uv \in E(H)$ such that $H^+/uv$ is 4-connected. Then
    $G/uv$ is a 4-connected braced triangulation with at least one brace which, by induction, is globally rigid. Theorem \ref{vsplitc} and ($\diamond$) now imply  that $G$ is globally rigid.
\hfill\qed

\paragraph{Acknowledgements} We would like to thank the referees for their careful reading and helpful comments which have greatly improved this paper. 
Our work was supported by 
JST ERATO Grant Number JPMJER1903,  
JSPS KAKENHI Grant Number 18K11155, and EPSRC overseas travel grant EP/T030461/1.


\begin{thebibliography}{10}

\bibitem{A} {\scshape T. Abbott}, \newblock Generalizations of Kempe's universality theorem, \newblock Master's thesis, Massachusetts
Institute of Technology, 2008.

\bibitem{BLSC} {\scshape 
G. Brinkmann, C. Larson, J. Souffriau and N. Van Cleemput}, \newblock Construction of planar 4-connected
triangulations, \newblock {\it Ars Math. Contemporanea} 9 (2015), 145--149.





\bibitem{Cglob} {\scshape R. Connelly},
\newblock
Generic global rigidity,
\newblock {\itshape Discrete Comput. Geom.} 33 (2005) 549-563.



\bibitem{C} {\scshape R. Connelly},
\newblock Questions, conjectures and remarks on globally rigid
tensegrities, preprint 2009, available at\\
\href{http://www.math.cornell.edu/~connelly/09-Thoughts.pdf}
{\texttt{http://www.math.cornell.edu/}{\raise.17ex\hbox{$\scriptstyle\sim$}}\texttt{connelly/09-Thoughts.pdf}}

\bibitem{CW} {\scshape R. Connelly and  W. Whiteley,}
\newblock  Global rigidity: the effect of coning,
\newblock {\itshape Disc. Comp. Geom.} 43 (2010),
717--735.

\bibitem{CJT} {\scshape J. Cruickshan, B. Jackson and S. Tanigawa}
\newblock Global rigidity of triangulated 
manifolds, \newblock manuscript.
 
\bibitem{G}
{\scshape  H. Gluck},
Almost all simply connected closed surfaces are rigid, in
{\it Geometric topology}, L. C. Glasing and T. B. Rushing eds., Lecture Notes in Math. 438,
Springer, Berlin,  1975, 225--239.


\bibitem{FJK} {\scshape Zs. Fekete, T. Jord\'an and V. E. Kaszanitzky},
\newblock Rigid two-dimensional frameworks with two
coincident points,
\newblock {\itshape Graphs and Combinatorics}
31 (2014),  585--599.





\bibitem{GHT}
\newblock {\sc S.~Gortler, A.~Healy and D.~Thurston}, Characterizing generic global rigidity,
\newblock {\em American J.~Math.} 132 (2010), 897--939.

\bibitem{GJ}
\newblock {\sc H.~Guler and B.~Jackson}, Coincident rigidity of 2-dimensional frameworks, {\itshape J. Graph Theory}, 
available at \\
\href{https://onlinelibrary.wiley.com/doi/10.1002/jgt.22737}
{\texttt{https://onlinelibrary.wiley.com/doi/10.1002/jgt.22737}}.

\bibitem{HM} {\scshape M. Hama and A. Nakamoto},
\newblock Generating 4-connected triangulations on closed surfaces,
\newblock {\itshape Mem. Osaka Kyoiku Univ. Ser. III Nat. Sci. Appl. Sci.} 50 (2002),
145--153.

\bibitem{H} {\scshape B. Hendrickson}, Conditions for unique graph realizations, {\itshape  SIAM J. Comput.} 21 (1992),  65--84 


\bibitem{JN1} {\scshape B. Jackson,  V. Kaszanitzky and A. Nixon},
Rigid cylindrical frameworks with two coincident points,
{\itshape Graphs and Combinatorics} 35 (2019),  141--168.


\bibitem{JJ} {\scshape B. Jackson and T. Jord\'an},
\newblock Connected rigidity matroids and unique realisations of graphs,
\newblock {\itshape J. Combin. Theory Ser. B} {94} (2005), 1--29.

\bibitem{JN2} {\scshape B. Jackson and A. Nixon},
Global rigidity of generic frameworks on the cylinder,  {\itshape J. Combin. Theory Ser. B}
139 (2019), 193--229.


\bibitem{JKT} {\scshape T. Jord\'an, C. Kir\'aly and S. Tanigawa},
\newblock Generic global rigidity of body-hinge frameworks,
\newblock {\itshape J. Combin. Theory Ser. B}
117 (2016), 59--76.

\bibitem{JKT1} {\scshape T. Jord\'an, C. Kir\'aly and S. Tanigawa},
\newblock On the vertex splitting operation in
globally rigid body-hinge graphs, Egerv\'ary Research Group Tech. Report, TR-2019-16, 2019, available at 
\href{http://bolyai.cs.elte.hu/egres/}
{\texttt{http://bolyai.cs.elte.hu/egres/}}.



\bibitem{JT} {\scshape
T. Jord\'an and S. Tanigawa}, \newblock Global rigidity of triangulations with braces,
\newblock {\itshape J. Combin. Theory Ser. B}
136 (2019), 249--288.

\bibitem{JW} {\scshape T.~Jord\'an and W.~Whiteley}, 
\newblock Global rigidity, \newblock in {\it Handbook of Discrete and Computational Geometry}, 
Third Edition, J.~E.~Goodman, J.~O'Rourke, and C.~D.~T{\'o}th (Eds.),
CRC Press LLC, 2017, 1661--1694.


\bibitem{L} {\scshape G.~Laman}, \newblock On graphs and rigidity of plane skeletal structures, \newblock 
{\em J. Engrg. Math} {4} (1970), 331--340.


\bibitem{PG} {\scshape H. Pollaczek-Geiringer}, \newblock \"{U}ber die Gliederung ebener Fachwerke, \newblock {\it Zeitschrift f\"{u}r Angewandte Mathematik und Mechanik (ZAMM)}, 7,  (1927), 58--72.


\bibitem{S} {\scshape J. B. Saxe},
\newblock Embeddability of weighted graphs in
$k$-space is strongly NP-hard,
\newblock Tech. Report, Computer Science
Department, Carnegie-Mellon University, Pittsburgh, PA, 1979.

\bibitem{SW} {\scshape B.~Schulze and W.~Whiteley}, 
\newblock Rigidity and scene analysis, \newblock in {\it Handbook of Discrete and Computational Geometry}, 
Third Edition, J.~E.~Goodman, J.~O'Rourke, and C.~D.~T{\'o}th (Eds.),
CRC Press LLC, 2017, 1593--1632.

\bibitem{W} {\scshape W. Whiteley}.
\newblock Infinitesimally rigid polyhedra. II: modified spherical frameworks,
\newblock {\itshape Trans.~Amer.~Math.~Soc.} 306 (1988), 115--139. 

\bibitem{Wsplit} {\scshape W. Whiteley},
\newblock Vertex splitting in isostatic frameworks,
\newblock {\itshape Structural Topology} 16 (1990), 23--30.

\bibitem{Wlong} {\scshape W. Whiteley},
\newblock Some matroids from discrete applied geometry,  \newblock in {\itshape Matroid Theory}, J. E. Bonin, J. G. Oxley, and B.
Servatius eds., Contemporary
Mathematics 197, American Mathematical Society,  1996, 171--313.



\end{thebibliography}
\end{document}